\documentclass[12pt]{article}
\usepackage[doc]{optional}
\usepackage{enumitem}
\usepackage{mathtools,esvect}
\usepackage{cite}
\usepackage{color}
\usepackage{float}
\usepackage{subcaption}
\usepackage{soul}
\usepackage{graphicx}
\definecolor{labelkey}{rgb}{0,0.08,0.45}
\definecolor{refkey}{rgb}{0,0.6,0.0}

\usepackage[left]{lineno}
\usepackage{blindtext}

%\definecolor{Brown}{rgb}{0.45,0.0,0.05}
%\definecolor{lime}{rgb}{0.00,0.8,0.0}
%\definecolor{lblue}{rgb}{0.5,0.5,0.99}
%\definecolor{lblue}{rgb}{0.8,0.85,1.00}
%\definecolor{anotherblue}{rgb}{.8, .8,1}
%\definecolor{violet}{rgb}{0.9,0.6,0.9}
%\definecolor{greenyellow}{rgb}{0.53,0.99,0.18}

%\definecolor{Lyellow}{rgb}{0.87,0.87,0.87}
%\definecolor{Lgray}{rgb}{0.92,0.92,0.92}
%\definecolor{Mgray}{rgb}{0.5,0.5,0.5}
%\definecolor{Gold}{rgb}{0.99,0.84,0.0}

\usepackage{mathpazo}
\usepackage{amsmath}
\usepackage{amssymb}
\usepackage{theorem}
\usepackage{fancyhdr}

\usepackage[margin=1.0in]{geometry}

\parindent  4mm
\parskip    4pt % was 10pt
\tolerance  3000

%%% Header & Footer %%%
%\pagestyle{fancy} \fancyhead{} \fancyfoot{}
%\fancyhead[L]{\texttt{Circumcentering Reflexions Algorithms}} \fancyfoot[C]{\thepage}

%%%%%%%%%%%%%%%

\newcommand{\fenv}[1]%
{\ensuremath{\,\overrightarrow{\operatorname{env}}_{#1}}}
\newcommand{\benv}[1]%
{\ensuremath{\,\overleftarrow{\operatorname{env}}_{#1}}}

\newcommand{\RR}{\ensuremath{\mathbb R}}

\newcommand{\NN}{\ensuremath{\mathbb N}}

\newcommand{\aff}{\ensuremath{\operatorname{aff}}}

\newcommand{\Fix}{\ensuremath{\operatorname{Fix}}}

\newcommand{\Id}{\ensuremath{\operatorname{Id}}}

%\newcommand{\bD}[1]{\overleftarrow{\thinspace D\thinspace}_%
%{\negthinspace\negthinspace #1}}

 %proximal with B=X
 %proximal with B

%\def\rate{\hat{ r}}

%%%%%%%%%%%%%%%%

%
{\begin{list}{}{%
\settowidth{\labelwidth}{\textrm{#1~}}%
\setlength{\leftmargin}{\labelwidth+\labelsep}}}%requires macro calc.sty
{\end{list}}
\newtheorem{theorem}{Theorem}[section]
\newtheorem{lemma}[theorem]{Lemma}

\theoremstyle{plain}{\theorembodyfont{\rmfamily}
}
\theoremstyle{plain}{\theorembodyfont{\rmfamily}
}
\theoremstyle{plain}{\theorembodyfont{\rmfamily}
}
\theoremstyle{plain}{\theorembodyfont{\rmfamily}
}

\theoremstyle{plain}{\theorembodyfont{\rmfamily}
}

%\def\endproof{\vbox{\hrule height0.6pt\hbox{\vrule height1.3ex%
%width0.6pt\hskip0.8ex\vrule width0.6pt}\hrule height0.6pt}}

%%%%%%%%%%%%%%%%%%%%%%%%%%%%%%%%
%%%%%%%%%%%% Citation %%%%%%%%%%
\def\doi{DOI}

%% 1:authors, 2:paper name, 3:journal,
%% 4:volume, 5:pages, 6:year

%% 1:authors, 2:paper name

%% 1:authors, 2:book name, 3:publisher&place, 4:year

%% 1:authors, 2:paper name, 3:conference,
%% 4:place+pages(opt), 5:year

%%--- LISTS ---%%%%%%%%%%%%%%%%%%%%%%%%%%%%%%%%%%%%%%%%%%%%%%%%%%%%%%%%%%%%%%%%%%%%%%%%%%%%%%%
\newcounter{count}

\allowdisplaybreaks

\begin{document}
%\linenumbers

\title{\textrm{On the linear convergence of the circumcentered--reflection method}}
\author{R. Behling\thanks{Department of Exact Sciences, Federal University of Santa Catarina.
Blumenau, SC -- 88040-900, Brazil. E-mail:
\texttt{\{roger.behling,l.r.santos\}@ufsc.br.}}~~~~~
J.Y.\ Bello Cruz\thanks{Department of Mathematical Sciences, Northern Illinois University. Watson Hall 366, DeKalb, IL -- 60115-6117, USA. E-mail:
\texttt{yunierbello@niu.edu.}}~~~~~L.-R.\ dos Santos\footnotemark[1] }

\date{\today}

\maketitle \thispagestyle{fancy}

\vskip 10mm

\begin{abstract}
%% Text of abstract
\noindent

In order to accelerate the Douglas--Rachford method we  recently developed the circumcentered--reflection method, which provides the closest iterate to the solution among all points relying on successive reflections, for the best approximation problem related to two affine subspaces. We now prove that this is still the case when considering a family of finitely many affine subspaces. This property yields linear convergence  and incites embedding of circumcenters within classical reflection and projection based methods for more general feasibility problems.

\textbf{Keywords:} reflection; projection; best approximation problem; Douglas--Rachford method.
\end{abstract}

% \begin{keyword}
% reflection \sep projection \sep best approximation problem \sep Douglas--Rachford method.
%% keywords here, in the form: keyword \sep keyword

%% PACS codes here, in the form: \PACS code \sep code

%% MSC codes here, in the form: \MSC code \sep code
%% or \MSC[2008] code \sep code (2000 is the default)

% \MSC[2010] 49M27 \sep 65K05 \sep 65B99
% \end{keyword}

% \end{frontmatter}

%% \linenumbers

%% main text
%\vspace{-0.02in}
\section{Introduction}
We consider the fundamental feasibility problem of projecting onto the intersection of (affine) subspaces, also referred to as \textit{linear best approximation problem}. Let $\{U_i\}_{i\in I}$ be a family of finitely many subspaces in $\RR^n$ (all nonempty) with $I\coloneqq \{1,2,3,\ldots,m\}$ and $m$ fixed (we require no relation between $n$ and $m$). Their nonempty intersection is denoted by $S\coloneqq \cap_{i\in I} U_i$ and the problem we are interested in consists of projecting a given point $x\in\RR^n$ onto $S$,  that is, 
\begin{equation}\label{Problem1}
\text{Find } \bar x\in S\text{ such that } \|\bar x-x\|=\min_{s\in S} \|s-x\|.
\end{equation}
It is well known that this problem has a unique solution $\bar x$ and that $\bar x=P_S(x)$ if, and only if, $x-\bar x\in S^\perp$, {\em i.e.},
$\langle x-\bar x, s\rangle =0$ for all $s\in S$, where $\langle \cdot, \cdot\rangle$ denotes the scalar product in $\RR^n$. Also, $\|\cdot\|$ is the induced Euclidean vector or matrix norm, and $P_S$ denotes the orthogonal projection onto $S$. 

Reflection and projection type methods are famous elementary tools for solving convex feasibility inclusions, in particular problem \eqref{Problem1}. They remain trendy due to good balance between efficiency and easy implementation, even in certain nonconvex settings~\cite{Bauschke:2006ej}. Broadly known are schemes based on the method of alternating projections (MAP)~\cite[Chapter~9]{Deutsch:2001fl} (which is also known as von Neumann's alternating projection algorithm)  and the Douglas--Rachford method (DRM)~\cite{Douglas:1956kk,BCNPW14} (or averaged--reflection method). DRM handles satisfactorily some highly relevant types of feasibility problems even nonconvex ones, nevertheless, it is proven to possibly fail for problem \eqref{Problem1} if $m>2$ (see \cite[Example $2.1$]{ABTcomb}). Variants for the many set case have therefore been developed, \emph{e.g.}, the cyclically anchored Douglas--Rachford algorithm (CADRA)~\cite{Bauschke:2015df} and the cyclic Douglas--Rachford method (CyDRM)~\cite{Borwein:2014ka}. MAP, on the other hand, manages to converge to a solution of problem~\eqref{Problem1}, at the price of frequent slow convergence, though (see, {\em e.g.},~\cite{BCNPW15}). 
%When $m=2$ in problem \eqref{Problem1}, it was proposed in \cite{Behling:2017da} the circumcentered-Douglas--Rachford method (C-DRM) which is, to a certain extend, meant to minimize the inherent zig-zag behavior of sequences generated by the standard reflection method called Douglas--Rachford.

The circumcentered--reflection method (CRM) proposed here is, to a certain extend, meant to minimize the inherent \textit{zig-zag} behavior of sequences generated by standard reflection and projection type methods. It may be seen as a generalization of the circumcentered-Douglas--Rachford method proposed in \cite{Behling:2017da} for accelerating the convergence of the DRM for \eqref{Problem1} with only two affine subspaces. This novel and simple idea relies on a necessary optimality condition arising from norm preserving after reflecting onto subspaces. In this paper, we show linear convergence of CRM for solving problem \eqref{Problem1}, which improves the results from \cite{Behling:2017da} and instigates theoretical and numerical investigation in more general contexts. 

After an ordered round of successive reflections onto the subspaces $U_i$'s, CRM chooses the new iterate by means of equidistance to the reflected points, justifying the use of the term \textit{circumcenter}. More precisely,
at a given point $x\in\RR^n$, CRM generates a next iterate $C(x)\in\RR^n$ with the following two properties:
%\begin{enumerate}

\noindent {\bf (a)} $C(x)$ belongs to the affine subspace 
%\end{enumerate}
% $$
\begin{align*}
W_x\coloneqq \aff\{x,R_{U_1}(x),R_{U_2}R_{U_1}(x), \ldots, R_{U_m} \cdots R_{U_2}R_{U_1}(x)\};
\end{align*}
%\begin{enumerate}
% $$
\noindent {\bf (b)}  $C(x)$ is equidistant to the points $x$, $R_{U_1}(x)$, $R_{U_2}R_{U_1}(x)$, \ldots, $R_{U_m} \cdots R_{U_2}R_{U_1}(x)$,

%\end{enumerate}
\noindent where $R_{U_i}$ denotes the reflection operator onto $U_i$, characterized by satisfying $R_{U_i}=2P_{U_i}-\Id$, with $\Id$ being the identity operator. Along the text, we use the abbreviation $x^{(i)}\coloneqq R_{U_i}\cdots R_{U_2}R_{U_1}(x)$ with $i\in I$.

We show that the (non-linear) CRM operator $C:\RR^n\rightarrow \RR^n$ is well defined and, for any $x\in\RR^n$, $C(x)\in\RR^n$ has the optimal property of being the point in $W_x$ lying closest to $S$. The computation of the circumcenter $C(x)$ requires the resolution of a suitable $m\times m$ linear system of equations, being therefore computable at low cost if $m$ is not too large. Our main theorem states that problem \eqref{Problem1} is solved by fixed points of the operator $C$. Moreover, the sequence $\{C^k(x)\}$ converges linearly to $P_S(x)$ and it does, at least, as fast as any reflection based method.

This paper is organized as follows. A study on a suitable averaged linear operator is carried out in Section 2, allowing us to establish linear convergence of CRM in Section 3. Section 4 presents a discussion on future work.
%\vspace{-0.02in}
\section{On an auxiliary linear operator}

In this section we will introduce the convenient operator $A:\RR^n\rightarrow\RR^n$, with 

\[A(x)\coloneqq\frac{1}{m}\sum_{i=1}^{m}A_i(x),\text{ where}\]  
\[A_1  \coloneqq \frac{1}{2}(\Id+P_{U_1}),\text{ and } A_i \coloneqq \frac{1}{2}(\Id+P_{U_i}R_{U_{i-1}}\cdots R_{U_1}),
\]
for $i=2,\ldots, m$.

We list next several properties of $A$, some of which will be used in our convergence analysis in the next section. For this, we recall that an operator $T:\RR^n\rightarrow \RR^n$ is called $\alpha$-averaged, with $\alpha \in (0,1)$, if there exists a linear non-expansive operator $G$, so that $T=\alpha \Id +(1-\alpha)G$. If this is true for $\alpha=1/2$, $T$ is called firmly non-expansive~\cite[Remark 4.34(iii)]{BC2011}.

\begin{lemma}\label{PropertiesA}
Let $A:\RR^n\rightarrow \RR^n$ be as above. Then:
\begin{description}
\item[(i)] $A$ is linear and $\tfrac{1}{2}$-averaged, {i.e.}, firmly non-expansive;
\item[(ii)] $\Fix A\coloneqq \{x\in\RR^n|A(x)=x\}=S$;
\item[(iii)] For all $x\in\RR^n$, $A(x)\in W_x$;
\item[(iv)] There exists $r_A\in [0,1)$ such that for all $x\in\RR^n$ it holds that $\|A(x)-P_S(x)\|\leq r_A \|x-P_S(x)\|$;
\item[(v)] Let $x\in\RR^n$ be given. Then, $P_S(A^k(x)) = P_S(x)$ for all $k\in\NN$;
\item [(vi)] Let $x\in\RR^n$ be given. The sequence $\{A^k(x)\}_{k\in\NN}$ converges to $P_S(x)$ with the linear rate $r_A$.
\end{description}
\end{lemma}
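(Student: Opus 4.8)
The plan is to establish the six assertions more or less in the order listed, since each tends to rest on its predecessors.

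For \textbf{(i)}, I would first observe that each building block $P_{U_i}R_{U_{i-1}}\cdots R_{U_1}$ is a composition of linear non-expansive operators (orthogonal projections and reflections onto subspaces are linear, and reflections onto subspaces are isometries), hence itself linear and non-expansive; thus each $A_i=\tfrac12(\Id+P_{U_i}R_{U_{i-1}}\cdots R_{U_1})$ is firmly non-expansive, i.e. $\tfrac12$-averaged, and linear. Writing $A_i=\tfrac12\Id+\tfrac12 G_i$ with $G_i$ linear non-expansive, we get $A=\tfrac12\Id+\tfrac12 G$ where $G\coloneqq\tfrac1m\sum_i G_i$ is a convex combination of linear non-expansive operators, hence linear and non-expansive. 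This yields (i).

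For \textbf{(ii)}: the inclusion $S\subseteq\Fix A$ is immediate, since $x\in S$ forces $R_{U_j}(x)=x$ and $P_{U_i}(x)=x$ for every $i,j$, so $A_i(x)=x$ for all $i$ and hence $A(x)=x$. For the reverse inclusion I would exploit firm non-expansiveness together with the fact that $A$ is an average: if $A(x)=x$, then for any $s\in S$ the identity $\|A(x)-s\|^2+\|x-A(x)\|^2\le\|x-s\|^2$ (characterizing firmly non-expansive operators with $s\in\Fix A$) forces each coordinate map $A_i$ to fix $x$ as well (here one uses that equality must propagate to every term of the average, since each $A_i$ is itself firmly non-expansive with $s$ a fixed point); then $A_i(x)=x$ means $P_{U_i}R_{U_{i-1}}\cdots R_{U_1}(x)=x$, and peeling off the reflections one at a time (each reflection is an isometry, and $\|P_{U_i}y\|\le\|y\|$ with equality iff $y\in U_i$) shows successively that $x\in U_1$, then $x\in U_2$, and so on, giving $x\in S$. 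Part \textbf{(iii)} follows by noting that $A_1(x)=\tfrac12(x+P_{U_1}x)$ lies on the segment $[x,x^{(1)}]$ (since $P_{U_1}x=\tfrac12(x+R_{U_1}x)$), and inductively $A_i(x)$ is an affine combination of $x^{(i-1)}$ and $x^{(i)}$; hence each $A_i(x)\in W_x$ and therefore so is their average $A(x)$, using that $W_x$ is an affine subspace.

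The substantive point is \textbf{(iv)}, and this is where I expect the main obstacle. The natural approach: decompose $\RR^n=S\oplus S^\perp$; since $A$ is linear and fixes $S$ pointwise, and since $A$ maps $S^\perp$ into $S^\perp$ (this needs checking — it follows because $P_{U_i}$ and $R_{U_i}$ commute with $P_S$ and $P_{S^\perp}$, as $S\subseteq U_i$), the restriction $A|_{S^\perp}$ is a linear non-expansive self-map of $S^\perp$ whose only fixed point is $0$. One then wants that such an operator has operator norm (equivalently spectral radius, by non-expansiveness and a normality-type argument, or by the spectral theory of averaged operators) strictly less than $1$ on $S^\perp$; take $r_A$ to be that norm. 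The delicate issue is ruling out eigenvalues of modulus $1$ other than fixing: because $A$ is $\tfrac12$-averaged, its spectrum lies in a disc of radius $\tfrac12$ centered at $\tfrac12$, so the only possible unimodular eigenvalue is $1$, whose eigenspace is $\Fix A=S$; restricted to $S^\perp$ there is then no unimodular eigenvalue, and a compactness argument on the unit sphere of $S^\perp$ gives $\|A(x)-P_S(x)\|=\|A|_{S^\perp}(x-P_S(x))\|\le r_A\|x-P_S(x)\|$ with $r_A<1$. (If $S=\RR^n$ set $r_A=0$.)

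Finally, \textbf{(v)} follows from (ii)--(iv): $A^k(x)-P_S(x)\in S^\perp$ for every $k$ by the invariance of $S^\perp$ under $A$, while $A^k(x)-x\in(\Fix A)^{\perp\perp}$-type reasoning — more simply, $P_S\circ A=P_S$ on all of $\RR^n$, because for any $y$ and any $s\in S$, $\langle A(y)-P_S(y),s\rangle = \langle A(y-P_S(y)),s\rangle=0$ since $A(y-P_S(y))\in S^\perp$; iterating gives $P_S(A^k(x))=P_S(x)$. And \textbf{(vi)} is then immediate: by (iv) and (v), $\|A^k(x)-P_S(x)\|=\|A(A^{k-1}(x))-P_S(A^{k-1}(x))\|\le r_A\|A^{k-1}(x)-P_S(A^{k-1}(x))\|$, so by induction $\|A^k(x)-P_S(x)\|\le r_A^k\|x-P_S(x)\|\to0$, which is linear convergence with rate $r_A$.
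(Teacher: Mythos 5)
Your items (i)--(iii) follow essentially the paper's route, and your arguments for (v) and (vi) are in fact more self-contained than the paper's (which proves (v) via a Fej\'er-type argument with an auxiliary point $s_\alpha$ and obtains (vi) by citation); they are correct granted $A(S^\perp)\subseteq S^\perp$, which you rightly deduce from $S\subseteq U_i$. The one genuine gap is in (iv), the heart of the lemma. You conclude $r_A<1$ from the claims that the operator norm of $A|_{S^\perp}$ equals its spectral radius ``by non-expansiveness,'' and that absence of unimodular eigenvalues on $S^\perp$ plus compactness of the unit sphere pushes the norm below $1$. Neither inference is valid as stated: for non-normal operators the norm can strictly exceed the spectral radius, and mere non-expansiveness does not repair this. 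The nilpotent matrix $N=\bigl(\begin{smallmatrix}0&1\\0&0\end{smallmatrix}\bigr)$ is non-expansive, has no eigenvalue of modulus one (spectral radius $0$), yet has norm exactly $1$; and even firm non-expansiveness does not force norm $=$ spectral radius, as $\tfrac12(\Id+N)$ shows (spectral radius $\tfrac12$, norm $\approx 0.81$). So ``no unimodular eigenvalue on $S^\perp$, plus compactness'' does not by itself yield $\sup_{y\in S^\perp,\,\|y\|=1}\|Ay\|<1$; what is missing is to actually use firm non-expansiveness at the maximizer, which your write-up only gestures at.

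The fix is short and is exactly the paper's argument: by compactness the supremum defining $r_A$ is attained at some $\bar y\in S^\perp$ with $\|\bar y\|=1$; if $r_A=1$, then, since $0\in\Fix A$, firm non-expansiveness gives $1=\|A\bar y\|^2\le\|\bar y\|^2-\|\bar y-A\bar y\|^2$, forcing $A\bar y=\bar y$, i.e.\ $\bar y\in\Fix A\cap S^\perp=S\cap S^\perp=\{0\}$, a contradiction; hence $r_A<1$, and then $\|A(x)-P_S(x)\|=\|A(x-P_S(x))\|\le r_A\|x-P_S(x)\|$ by linearity and $A(P_S(x))=P_S(x)$. Note this does not even require the invariance $A(S^\perp)\subseteq S^\perp$ (which you do legitimately use in (v)), and it makes the spectral-disc observation superfluous. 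A minor further point: in (ii), ``equality must propagate to every term of the average'' should be justified, e.g.\ by convexity of $\|\cdot\|^2$ applied to $\|x-s\|^2=\bigl\|\tfrac1m\sum_i(A_i(x)-s)\bigr\|^2$ together with the firm non-expansiveness of each $A_i$; the peeling induction that follows is correct and is, if anything, cleaner than the paper's corresponding step.
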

\begin{proof}
\noindent {\bf (i)} Note that each of the operators $A_i$ $(i=1,2,\ldots,m)$ is the mean between the identity $\Id$ and the composition of non-expasive operators (reflections and projection) $P_{U_{i}}R_{U_{i-1}}\cdots R_{U_{1}}$ $(i=1,2,\ldots,m)$. The factor $\frac{1}{2}$ serves to guarantee $\tfrac{1}{2}$-averaged of the $A_i$'s. By using  \cite[Proposition 4.42]{BC2011}, $A$ is $\tfrac{1}{2}$-averaged, which is equivalent to $A$ being firmly non-expansive.

\noindent {\bf (ii)}  If $x\in S$ then $A_i(x)=x$ and hence $A(x)=x$. Conversely, it is easy to see that  $\Fix A_i=U_i$. So, if $x=A(x)=\tfrac{1}{m}	(A_1(x)+A_2(x)+\cdots+A_m(x))$ then 
\begin{align}
\nonumber
\label{norm-fix}
\|x\|&=\|A(x)\|=\frac{1}{m}\left\|A_1(x)+A_2(x)+\cdots+A_m(x)\right\| \\ 
&\le\frac{1}{m}\left(\|A_1(x)\|+\|A_2(x)\|+\cdots+\|A_m(x)\|\right).
\end{align} 
Now, if there exists an index $j\in I$ such that $x\notin \Fix A_j=U_j$,  we get $\|A_j(x)\|<\|x\|$ by using the firmly non-expansiveness of $A_i$ $(i=1,2,\ldots,m)$. Hence, \eqref{norm-fix} leads us to a contradiction. Thus, $x\in \Fix A_i=U_i$ for all $(i=1,2,\ldots,m)$.

\noindent {\bf (iii)} This item follows by rewriting \[A_1(x)=\displaystyle \frac{1}{2}\left(x+\frac{x^{(1)}+x}{2}\right)\] and  \[A_i(x)=\frac{1}{2}\left(x+\frac{x^{(i)}+x^{(i-1)}}{2}\right)\] for $i=2,\ldots,m$. In this way, we clearly get that $A(x)\coloneqq\frac{1}{m}\sum_{i=1}^{m}A_i(x)$ is a linear combination of the points $x,x^{(1)}, x^{(2)}, \ldots, x^{(m)}$, \textit{i.e.}, $A(x)\in W_x$.

\noindent {\bf (iv)} If $x\in S$, the statement is trivial. By using  \cite[Proposition 4.35]{BC2011} and Lemma \ref{PropertiesA}(i), we get, for any $x\in \RR^n$ and $s\in S$,
\begin{equation}\label{fejer*}\|A(x)-s\|^2\le \|x-s\|^2-\|x-A(x)\|^2.
\end{equation}
Also, for any $x\in \RR^n$, $x=P_S(x)+P_{S^\perp}(x)=P_S(x)+(x-P_S(x))$ and 
so \begin{align*}
	&x - A(x)  = P_S(x) + x -P_S(x)-A(P_S(x) + x -P_S(x)) \\ 
			  &= (x-P_S(x))-A(x-P_S(x))) + P_S(x)-A(P_S(x))\\ 
			  &= (x-P_S(x))-A(x-P_S(x))).
\end{align*}
since $\Fix A = S$. Thus, it follows from \eqref{fejer*} with $s=P_S(x)$ that 
\begin{align*}\nonumber\|A(x)-P_S(x)\|^2   \le& \|x-P_S(x)\|^2\\&-\|(x-P_S(x))-A(x-P_S(x))\|^2 
\\ <&\|x-P_S(x)\|^2,%\label{fejer}
\end{align*} 
where the last strict inequality holds for any $x\notin  S$. Furthermore, 
\begin{align}\label{fejer1}
\nonumber
\|A(x)-P_S(x)\| & = \|A(x)-A(P_S(x))\| \\\nonumber  & = \|A(x-P_S(x))\|\\ &\le r_A \|x-P_S(x)\|,
\end{align}
 where $r_A\coloneqq \sup_{y\in S^\perp, \; \|y\|=1} \|Ay\|$ because $x-P_S(x)\in S^\perp$. Note that $r_A<1$, otherwise we get $\bar y\in S^\perp$ such that  $\|A\bar y\|=\|\bar y\|=1$ (by Weierstrass Theorem), contradicting the firmly non-expansiveness of $A$ in \eqref{fejer*} and the fact that $\bar y\notin \Fix A=S$.  

\noindent {\bf (v)} Fix $k\in \NN$, $\alpha \in \RR$, and define $s_\alpha \coloneqq  \alpha P_S(x) +(1-\alpha)P_S(A^k(x))$. Since $S$ is a subspace, $s_\alpha \in S$, and it therefore follows from \eqref{fejer*} and Pythagoras that
\begin{align*}
&\alpha^2 \| P_S(A^k(x))-P_S(x)\|^2 =\| P_S(A^k(x))-s_\alpha\|^2 \\ 
&\le  \|A^k(x) - P_S(A^k(x))\|^2 + \|P_S(A^k(x)) - s_\alpha\|^2\\
&= \|A^k(x) - s_\alpha\|^2\le  \|x - s_\alpha\|^2 \\
&= \|x - P_S(x)\|^2 + \|P_S(x) - s_\alpha\|^2\\
&= {\rm dist}(x,S)^2 + (1 - \alpha)^2\|P_S(A^k(x)) - P_S(x)\|^2.
\end{align*}
Thus, $(2\alpha - 1)\|P_S(A^k(x)) - P_S(x) \|^2 \le  {\rm dist}(x,S)^2$ and, letting $\alpha\to +\infty$, we get $P_S(A^k(x)) = P_S(x)$.

\noindent {\bf (vi)} The convergence of $\{A^k(x)\}$ is consequence of \cite[Proposition 5.16]{BC2011}. This convergence is linear with linear rate $r_A$ given in (iv) because \cite[Corollary 2.8]{Bauschke:2017hz} and (iv). Finally, \cite[ Corollary 5.8]{BC2011} and (v) ensure that the underlying sequence  converges to $P_S(x)$. 
\end{proof}

%In the next section we prove that CRM inherits the rate of convergence of $\{A^k(x)\}_{k\in\NN}$ to $P_S(x)$.
%\vspace{-0.02in}
\section{Convergence analysis of the circumcentered--reflection method}
In order to prove the linear convergence of CRM for problem \eqref{Problem1}, our first lemma not only establishes the existence and uniqueness of the circumcenter, but  also characterizes $C(x)$ as the projection of any $s\in S$ onto $W_x=\aff\{x,x^{(1)}, x^{(2)}, \ldots, x^{(m)}\}$.

\begin{lemma}\label{ProjectionOntoAffReflections}
For any $s\in S$, $x\in\RR^n$, it holds that $C(x)=P_{W_x}(s)$.
\end{lemma}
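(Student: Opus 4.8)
The plan is to rest everything on one elementary observation: since $S\subseteq U_i$ for every $i\in I$, each $s\in S$ is \emph{equidistant} from all the points $x,x^{(1)},x^{(2)},\ldots,x^{(m)}$ that generate $W_x$. Granting this, orthogonal projection onto $W_x$ will be seen to preserve equidistance, so $P_{W_x}(s)$ automatically satisfies the two defining properties (a) and (b) of the circumcenter; a short uniqueness argument then gives both well-definedness of $C(x)$ and the identity $C(x)=P_{W_x}(s)$.

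First I would prove the equidistance claim. Because $S\subseteq U_i$, the reflection $R_{U_i}$ fixes every point of $S$, and $R_{U_i}$ is a linear isometry, so $\|R_{U_i}(y)-s\|=\|R_{U_i}(y)-R_{U_i}(s)\|=\|y-s\|$ for all $y\in\RR^n$ and all $s\in S$. Applying this with $y=x^{(i-1)}$ and recalling $x^{(i)}=R_{U_i}(x^{(i-1)})$ (with the convention $x^{(0)}=x$), an induction on $i$ yields $\|x^{(i)}-s\|=\|x-s\|$ for $i=0,1,\ldots,m$. Equivalently, every $s\in S$ lies on the perpendicular bisector of each segment $[x,x^{(i)}]$.

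Next, set $p\coloneqq P_{W_x}(s)$. Since $x^{(0)},x^{(1)},\ldots,x^{(m)}$ all lie in $W_x$ and $s-p$ is orthogonal to the direction subspace of $W_x$, Pythagoras gives $\|s-x^{(i)}\|^2=\|s-p\|^2+\|p-x^{(i)}\|^2$ for each $i$. The left-hand side is independent of $i$ by the previous paragraph, hence so is $\|p-x^{(i)}\|$. Thus $p\in W_x$ and $p$ is equidistant from $x,x^{(1)},\ldots,x^{(m)}$, so $p$ meets both requirements (a) and (b); in particular a circumcenter exists.

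Finally I would establish uniqueness, which also finishes the proof. If $c$ satisfies (a) and (b), expand $\|c-x^{(i)}\|^2=\|c-x\|^2$ and the analogous identity $\|p-x^{(i)}\|^2=\|p-x\|^2$ and subtract; the quadratic terms in $c$ and in $p$ cancel, leaving $\langle c-p,\,x^{(i)}-x\rangle=0$ for every $i\in I$. Hence $c-p$ is orthogonal to $\lspan\{x^{(i)}-x:i\in I\}$, the direction subspace of $W_x$; but $c,p\in W_x$ forces $c-p$ to lie \emph{in} that subspace, so $c-p=0$. Therefore $C(x)$ is well defined and equals $P_{W_x}(s)$ for every $s\in S$ (which incidentally shows $P_{W_x}(s)$ does not depend on the choice of $s\in S$). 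I do not expect a genuine obstacle; the only delicate point is the bookkeeping that $x=x^{(0)}$ plays two roles — as the common "anchor" in the perpendicular-bisector equations and as one of the spanning points of $W_x$ — which is precisely what makes the final orthogonality relation collapse to $c=p$.
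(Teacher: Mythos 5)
Your proposal is correct and follows essentially the same route as the paper: the equidistance $\|x^{(i)}-s\|=\|x-s\|$ coming from the reflections fixing $S$, then Pythagoras with $p=P_{W_x}(s)$ to show $p$ is a circumcenter in $W_x$. The only difference is that you spell out uniqueness via the explicit orthogonality computation $\langle c-p, x^{(i)}-x\rangle=0$, whereas the paper simply invokes uniqueness of centers of balls in the finite-dimensional space $W_x$ — your version is a bit more self-contained but the argument is the same in substance.
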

\begin{proof}
Let $x\in \RR^n$ and $s\in S$ be arbitrary but fixed. Also, let $p$ denote the projection of $s$ onto the nonempty affine space $W_x$. The definition of reflections, Pythagoras and the assumption that $s\in S$ trivially imply the following optimality condition 
\begin{equation}\label{NormPreservReflect}
\|x-s\|=\|x^{(1)}-s\|=\|x^{(2)}-s\|=\cdots=\|x^{(m)}-s\|.
\end{equation}
Note also that $p=P_{W_x}(s)$ allows us to use Pythagoras for each $x^{(i)}$ as follows
% \begin{equation*}
\begin{align*}
\|x-s\|^2  &= \|p-s\|^2+\|x-p\|^2, \\ \|x^{(i)}-s\|^2  &= \|p-s\|^2+\|x^{(i)}-p\|^2, \quad i=1,\ldots, m.
\end{align*}						
% \end{equation*}
Clearly, these equalities combined with \eqref{NormPreservReflect} give us
$$
\|x-p\|=\|x^{(1)}-p\|=\|x^{(2)}-p\|=\cdots=\|x^{(m)}-p\|,
$$
that is, $p=P_{W_x}(s)$ is a circumcenter with respect to $x, x^{(1)}, x^{(2)}, \ldots, x^{(m)}$ in $W_x$. Regarding uniqueness, it is a straightforward consequence of uniqueness of centers of closed balls in $\RR^{q(x)}$, with $q(x)\coloneqq \textrm{dim}(W_x)\leq \min\{m,n\}$.
\end{proof}

Before moving on with the convergence analysis, let us explain how $C(x)$ can be computed. $C(x)$ is characterized by being a linear combination of $x$ and the $x^{(i)}$'s (in order to be in $W_x$) with the additional property that its projection onto each closed segment $[x^{(i)},x]$ defines a bisector, that is, the projections of $C(x)$ onto those segments lie on their midpoint, to ensure equidistance. Hence, by anchoring the problem at $x$, we want the conditions $C(x)-x=\sum_{j=1}^{m}\alpha_i (x^{(j)}-x)$ and $P_{\textrm{span}\{x^{(i)}-x\}}(C(x)-x)=\frac{1}{2}(x^{(i)}-x)$, for all $i=1, \ldots, m$, to hold. This yields the solvable $m\times m$ linear system in $\alpha\in\RR^m$ whose $i$-th row is given by
\[
 \sum_{j=1}^{m}\alpha_j\langle x^{(j)}-x, x^{(i)}-x\rangle=\frac{1}{2}\|x^{(i)}-x\|^2.
\] 
$C(x)$ outcomes univocally from this linear system. Uniqueness in $\alpha$, however, depends on linear independence of the vectors $x^{(i)}-x$, which is not always the case.

%In the following, we use the results derived in the previous section to obtain a linear gain towards $S$, from $x$ to $C(x)$.

\begin{lemma}\label{LinearGainCircumcenter}
For all $x\in\RR^n$, it holds that $P_S(C^k(x))=P_S(x)$ for all $k\in \NN$ and $\|C(x)-P_S(x)\|\leq r_A \|x-P_S(x)\|$, where $r_A\in [0,1)$ is as in Lemma~\ref{PropertiesA}.
\end{lemma}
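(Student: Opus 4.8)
The plan is to exploit Lemma~\ref{ProjectionOntoAffReflections} together with Lemma~\ref{PropertiesA}, using the key geometric fact that $C(x)$ is the point of $W_x$ closest to $S$, while $A(x)$ is merely \emph{some} point of $W_x$ (by Lemma~\ref{PropertiesA}(iii)).

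First I would establish the ``$P_S$-invariance'' claim $P_S(C^k(x))=P_S(x)$ for all $k\in\NN$. Fixing $k$ and writing $y\coloneqq C^k(x)$, the strategy mirrors the proof of Lemma~\ref{PropertiesA}(v): for arbitrary $\alpha\in\RR$ set $s_\alpha\coloneqq \alpha P_S(x)+(1-\alpha)P_S(y)\in S$, then use Lemma~\ref{ProjectionOntoAffReflections} (with the point $s_\alpha\in S$) to identify $C(y)=C^{k+1}(x)=P_{W_y}(s_\alpha)$, apply Pythagoras to the projection onto the affine set $W_y$, and compare $\|y-s_\alpha\|^2$ with $\|P_{W_y}(s_\alpha)-s_\alpha\|^2$. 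Actually it is cleaner to induct: assuming $P_S(C^{k-1}(x))=P_S(x)$ and writing $z\coloneqq C^{k-1}(x)$, I note $z\in W_z$ trivially and $C^k(x)=P_{W_z}(s)$ for every $s\in S$; then for $s=P_S(z)=P_S(x)$, Pythagoras on $W_z$ gives $\|C^k(x)-P_S(x)\|^2+\|z-C^k(x)\|^2=\|z-P_S(x)\|^2$, and since $C^k(x)\in W_z\subseteq\RR^n$ with $C^k(x)$ the foot of the perpendicular from $s$, the segment from $s$ to $C^k(x)$ is orthogonal to $W_z$; hence for any $s'\in S$, expanding $\|C^k(x)-s'\|^2$ and using $C^k(x)-s'=\big(C^k(x)-P_S(x)\big)+\big(P_S(x)-s'\big)$ together with the orthogonality, one deduces $P_S(C^k(x))=P_S(x)$ exactly as in Lemma~\ref{PropertiesA}(v). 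The base case $k=1$ is the same argument with $z=x$.

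Second, for the contraction estimate, fix $x\notin S$ (the case $x\in S$ being trivial) and let $s\coloneqq P_S(x)\in S$. By Lemma~\ref{ProjectionOntoAffReflections}, $C(x)=P_{W_x}(s)$, i.e.\ $C(x)$ is the nearest point of $W_x$ to $s$; since $A(x)\in W_x$ by Lemma~\ref{PropertiesA}(iii), we immediately get
\[
\|C(x)-P_S(x)\| = \|C(x)-s\| = \dist(s,W_x) \le \|A(x)-s\| = \|A(x)-P_S(x)\|.
\]
Then Lemma~\ref{PropertiesA}(iv) yields $\|A(x)-P_S(x)\|\le r_A\|x-P_S(x)\|$, and chaining the two inequalities gives $\|C(x)-P_S(x)\|\le r_A\|x-P_S(x)\|$, as required.

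\textbf{Main obstacle.} The contraction half is essentially a one-line comparison once Lemmas~\ref{ProjectionOntoAffReflections} and~\ref{PropertiesA}(iii)--(iv) are in hand; the only point demanding care is the $P_S$-invariance, where one must correctly handle that $W_{C^{k-1}(x)}$ is a different affine set at each step and invoke the optimality (orthogonality) characterization of $C^k(x)=P_{W}(s)$ to transfer the projection of $C^{k-1}(x)$ onto $S$ over to $C^k(x)$. Organizing this as a clean induction on $k$, with the inductive hypothesis feeding the right choice of $s\in S$ into Lemma~\ref{ProjectionOntoAffReflections}, is the step I would be most careful to write out in full.
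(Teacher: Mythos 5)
Your contraction estimate is exactly the paper's argument: $C(x)=P_{W_x}(P_S(x))$ from Lemma~\ref{ProjectionOntoAffReflections}, together with $A(x)\in W_x$ (Lemma~\ref{PropertiesA}(iii)) and Lemma~\ref{PropertiesA}(iv), gives $\|C(x)-P_S(x)\|\le\|A(x)-P_S(x)\|\le r_A\|x-P_S(x)\|$, word for word as in the paper. For the invariance $P_S(C^k(x))=P_S(x)$ you take a different route. The paper uses neither the $s_\alpha$, $\alpha\to+\infty$ device nor any explicit orthogonality to $S$: it writes the two Pythagoras identities $\|P_S(C(x))-C(x)\|^2=\|P_S(C(x))-x\|^2-\|C(x)-x\|^2$ and $\|P_S(x)-C(x)\|^2=\|P_S(x)-x\|^2-\|C(x)-x\|^2$ (both legitimate because every point of $S$ projects onto $W_x$ at the same point $C(x)$, and $x\in W_x$), then uses the nearest-point property $\|P_S(x)-x\|\le\|P_S(C(x))-x\|$ to get $\|P_S(x)-C(x)\|\le\|P_S(C(x))-C(x)\|$, which forces $P_S(C(x))=P_S(x)$; induction then gives all $k$. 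Your first suggested route, mirroring Lemma~\ref{PropertiesA}(v) with $s_\alpha$, does work: the needed Fej\'er-type inequality $\|C(y)-s\|\le\|y-s\|$ for every $s\in S$ follows from Lemma~\ref{ProjectionOntoAffReflections} and $y\in W_y$, and the rest of the computation from (v) carries over verbatim. It is simply longer than the paper's two-triangle comparison.

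The induction variant you endorse as ``cleaner'' has a loose step, though. The orthogonality you invoke is that $s-C^k(x)$ is orthogonal to the direction space of $W_z$; but to annihilate the cross term $\langle C^k(x)-P_S(x),\,P_S(x)-s'\rangle$ in your expansion you need $C^k(x)-P_S(x)\in S^\perp$, which does not follow from that statement alone. To close it, observe additionally that the direction space of $W_z$ is contained in $S^\perp$ (each reflection step changes the point by an element of $U_j^\perp\subseteq S^\perp$; alternatively, two distinct points of $S$ have the same projection onto $W_z$ by Lemma~\ref{ProjectionOntoAffReflections}, so their difference is orthogonal to $W_z$'s direction space, i.e.\ $S$ is orthogonal to it), and then decompose $C^k(x)-P_S(x)=(C^k(x)-z)+(z-P_S(x))$: the first summand lies in that direction space, the second lies in $S^\perp$ by the induction hypothesis. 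With this observation added your induction is correct; as written, the deduction ``together with the orthogonality, one deduces $P_S(C^k(x))=P_S(x)$'' is a gap, albeit an easily repaired one.
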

\begin{proof}
Combining $C(x)=P_{W_x}(s)$ from Lemma~\ref{ProjectionOntoAffReflections}, $A(x)\in W_x$ from Lemma \ref{PropertiesA}(iii) and $\|A(x)-P_S(x)\|\leq r_A \|x-P_S(x)\|$ from Lemma \ref{PropertiesA}(iv) yields $$\|C(x)-P_S(x)\|\leq \|A(x)-P_S(x)\|\leq r_A \|x-P_S(x)\|.$$ Note that $P_S(C^k(x))=P_S(x)$ follows easily by induction once $P_S(C(x))=P_S(x)$ is proven. Thus, let us prove the latter. Using again Lemma~\ref{ProjectionOntoAffReflections} we can employ Pythagoras for two suitable triangles
\begin{gather*}
\|P_S(C(x))-C(x)\|^2=\|P_S(C(x))-x\|^2 -\|C(x)-x\|^2,\\
\|P_S(x)-C(x)\|^2=\|P_S(x)-x\|^2 -\|C(x)-x\|^2.
\end{gather*}
These equalities together with $\|P_S(x)-x\|\leq \|P_S(C(x))-x\|$ imply that $\|P_S(x)-C(x)\|\leq \|P_S(C(x))-C(x)\|,$ proving $P_S(C^k(x))=P_S(x)$.
\end{proof}

The previous lemma yields linear convergence of CRM for solving problem \eqref{Problem1}.

\begin{theorem}\label{the:CRMconvergence}
Let $x\in\RR^n$ be given. Then, the {\rm CRM} sequence $\{C^k(x)\}$ converges to $P_S(x)$ with the linear rate $r_A\in [0,1)$, given in Lemma \ref{LinearGainCircumcenter}.
\end{theorem}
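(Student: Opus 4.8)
The plan is to leverage \Cref{LinearGainCircumcenter} and mimic almost verbatim the argument used for \Cref{PropertiesA}(vi), since the two hypotheses we need are now in hand: the contraction estimate $\|C(x)-P_S(x)\|\le r_A\|x-P_S(x)\|$ and the invariance $P_S(C^k(x))=P_S(x)$. First I would set $\bar x\coloneqq P_S(x)$ and iterate the contraction inequality: applying \Cref{LinearGainCircumcenter} with the point $C^{k-1}(x)$ in place of $x$, and using $P_S(C^{k-1}(x))=\bar x$, gives $\|C^k(x)-\bar x\|=\|C(C^{k-1}(x))-P_S(C^{k-1}(x))\|\le r_A\|C^{k-1}(x)-P_S(C^{k-1}(x))\|=r_A\|C^{k-1}(x)-\bar x\|$. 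A trivial induction then yields $\|C^k(x)-\bar x\|\le r_A^k\|x-\bar x\|$ for every $k\in\NN$.

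Since $r_A\in[0,1)$, the right-hand side tends to $0$, so $C^k(x)\to\bar x=P_S(x)$, and the convergence is linear with rate $r_A$ by the very form of the bound $\|C^k(x)-P_S(x)\|\le r_A^k\|x-P_S(x)\|$. This is the whole proof; no fixed-point or Fejér-monotonicity machinery is needed here because \Cref{LinearGainCircumcenter} already packages the two nontrivial facts. One small point worth stating explicitly is that $C$ is well defined as a map $\RR^n\to\RR^n$ (this was established in \Cref{ProjectionOntoAffReflections} together with the computational discussion), so $C^k(x)$ makes sense for all $k$.

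I do not anticipate a genuine obstacle: the heavy lifting — existence and uniqueness of the circumcenter, its identification with $P_{W_x}(s)$ for $s\in S$, and the comparison with the averaged operator $A$ — was done in \Cref{ProjectionOntoAffReflections}, \Cref{PropertiesA}, and \Cref{LinearGainCircumcenter}. The only thing to be slightly careful about is the bookkeeping in the induction, namely invoking $P_S(C^{k-1}(x))=P_S(x)$ at each step so that the contraction inequality can be re-applied at $C^{k-1}(x)$ rather than only at $x$; this is exactly what the first clause of \Cref{LinearGainCircumcenter} supplies. Hence the argument is a short three-line induction followed by the observation that a geometric bound forces linear convergence.
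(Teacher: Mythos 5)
Your proposal is correct and is essentially identical to the paper's own proof: both apply Lemma~\ref{LinearGainCircumcenter} at the point $C^{k}(x)$, use $P_S(C^{k}(x))=P_S(x)$ to rewrite the projection, and induct to obtain $\|C^{k}(x)-P_S(x)\|\le r_A^{k}\|x-P_S(x)\|$, from which linear convergence to $P_S(x)$ follows since $r_A\in[0,1)$. No gaps; the bookkeeping point you flag (re-invoking the projection invariance at each step) is exactly how the paper carries out the induction.
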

\begin{proof} Given $x\in\RR^n$ and for all $k\in\NN$, Lemma \ref{LinearGainCircumcenter} provides 
\[
\begin{aligned}
\|C^{k+1}(x)-P_S(x)\| & =\|C(C^k(x))-P_S(C^k(x))\| \\
				    & \leq r_A \|C^k(x)-P_S(C^k(x))\| \\ 
				    & =r_A\|C^k(x)-P_S(x)\|,
\end{aligned} 
\]
which implies $\|C^{k}(x)-P_S(x)\|\le r_A^{k}\|x-P_S(x)\|$.
\end{proof}

Theorem~\ref{the:CRMconvergence} can be easily extended to affine subspaces with nonempty intersection~\cite[Corol.~3]{Behling:2017da}.
%\vspace*{-0.25in}
\section{Outlook}\label{FinalRemarks}

In this paper we proved linear convergence of the circumcentered--reflection method CRM for best approximations problems related to a family of finitely many (affine) subspaces in $\RR^n$. This result is the basis for further and more general investigation. In this sense, we list now a few directions of future work.

One of the first questions that arises is whether one can derive convergence of CRM in Hilbert spaces. Remember that, in infinite dimension, the $U_i$'s are not necessarily closed. This alone suggests closedness assumptions in this setting, as we need welldefinedness of reflections/projections onto each $U_i$ and $S$. Less obvious than this remark, is that it might be worth monitoring the sequence $\{C^k(P_{U_j}(x))\}$ instead of $\{C^k(x)\}$, where $j$ is any index between $1$ and $m$. Note that Pythagoras guarantees that the best solution to $x$ and $P_{U_j}(x)$ coincide, that is, $P_S(x)=P_S(P_{U_j}(x))$. The advantage of enforcing such an initialization is due to the fact that $\{C^k(P_{U_j}(x))\}$ lies in $U_1+U_2+\cdots +U_m$, while $\{C^k(x)\}$ may \textit{zig-zag} in a larger region of the corresponding Hilbert space. Generating $\{C^k(P_{U_j}(x))\}$ instead of $\{C^k(x)\}$ is a point to be taken into account even in finite dimensional spaces.

Obviously, extensive numerical experiments on CRM and variants have to be conducted, expectantly with results as favorable as in \cite{Behling:2017da}, where $m=2$ was considered. We intend to compare CRM with prominent methods for solving problem \eqref{Problem1}, \textit{e.g.}, MAP, CADRA, CyDRM, Cimmino method~\cite{Cimmino:1938tp}. Our educated guess is that these methods  could benefit numerically from suitably incorporating circumcentering based techniques. Also, the possibility of taking pairwise or $q$-wise ($q<m$) circumcenters is in the scope of our future studies, as well as deriving the sharp rate for CRM.  Positive experiments together with its geometrical appeal, may reveal CRM as an option for solving highly important feasibility problems (even nonconvex) involving (affine) subspaces, with very promising behavior in several applications, {\em e.g.}, the matrix completion problem~\cite{AragonArtacho:2014id},  the basis pursuit problem~\cite{Demanet:2016fj} and the nonconvex sparse affine feasibility problem~\cite{Hesse:2014gi}. 

The most relevant and nontrivial task, perhaps, is to extend the theory of CRM to certain inconsistent affine~\cite{Bauschke:2016bi,Bauschke:2016jw} and non-affine settings~\cite{Borwein:2015vm}, which has already been established for DRM and its variants. Among them, we are particularly interested in convex feasibility problems as in~\cite{Bauschke:2006ej} and general convex inclusions ({\em e.g.} \cite{Svaiter:2011fb,Bauschke:2004fj,Bauschke:2016bt}). Simple examples, however, show that the circumcenter $C(x)$ may not be defined for general convex sets at some ``pathological'' points $x$. Whenever this happens, our guess is that our operator $A$ in Section 2 provides a good replacement for $C(x)$.\\

\noindent {\bf Acknowledgments:} We thank the anonymous referees for their valuable suggestions. JYBC was partially supported by a startup research grant of Northern Illinois University.  

%% The Appendices part is started with the command \appendix;
%% appendix sections are then done as normal sections
%% \appendix

%% \section{}
%% \label{}

%% If you have bibdatabase file and want bibtex to generate the
%% bibitems, please use
%%
% \section*{References}
%\vspace{-0.02in}
{\small
 % \bibliographystyle{elsarticle-num} 
 % \bibliography{references}

}

%\bibliographystyle{abbrv}
%\bibliography{NSF_biblio}

\end{document}